\documentclass[11pt]{article}

\usepackage[letterpaper,margin=1in]{geometry}
\usepackage{amsmath,amssymb,amsthm,mathtools}
\usepackage{enumitem}
\usepackage{xcolor}
\usepackage{microtype}
\usepackage{float}
\usepackage{tikz}
\usetikzlibrary{arrows.meta,calc,positioning}
\usepackage[most]{tcolorbox}
\usepackage[hidelinks]{hyperref}

\definecolor{linkblue}{RGB}{24,76,145}
\definecolor{setblue}{RGB}{35,91,148}
\definecolor{setfill}{RGB}{225,238,249}
\definecolor{repairorange}{RGB}{213,119,36}
\definecolor{querygreen}{RGB}{46,128,91}
\hypersetup{colorlinks=true,linkcolor=linkblue,citecolor=linkblue,urlcolor=linkblue}
\setlist[itemize]{leftmargin=1.5em,itemsep=0.2em,topsep=0.35em}
\setlist[enumerate]{leftmargin=1.7em,itemsep=0.25em,topsep=0.35em}

\newtheorem{theorem}{Theorem}[section]
\newtheorem{lemma}[theorem]{Lemma}
\newtheorem{proposition}[theorem]{Proposition}
\newtheorem{corollary}[theorem]{Corollary}

\newcommand{\R}{\mathbb{R}}
\newcommand{\K}{\mathcal{K}}
\newcommand{\B}{\mathbb{B}}
\newcommand{\bd}{\partial}
\newcommand{\dist}{\operatorname{dist}}
\newcommand{\diam}{\operatorname{diam}}
\newcommand{\MEM}{\operatorname{MEM}_{\K}}
\newcommand{\ip}[2]{\langle #1,#2\rangle}
\newcommand{\norm}[1]{\lVert #1\rVert}
\newcommand{\eps}{\varepsilon}

\newtcolorbox{algorithmbox}[1]{
  enhanced,
  colback=setfill!45,
  colframe=setblue,
  boxrule=0.8pt,
  arc=1.5mm,
  left=1.5mm,right=1.5mm,top=1mm,bottom=1mm,
  title={#1},
  fonttitle=\bfseries,
  coltitle=black,
  attach boxed title to top left={xshift=2mm,yshift=-2mm},
  boxed title style={colback=white,colframe=setblue,boxrule=0.8pt,arc=1mm}
}

\title{A Linearly Convergent Projection-Free Algorithm\\for Smooth Convex Sets}
\author{Elad Hazan}
\date{}

\begin{document}
\maketitle

\begin{abstract}
We consider minimizing a smooth, strongly convex function over a convex set. Projected gradient descent is known to converge linearly in this setting, but each iteration requires a projection onto the feasible set, which may be computationally expensive.

We show that when the feasible set is smooth, projection can be replaced by one gradient computation and a single supporting-tangent computation per iteration, while preserving linear convergence. Moreover, the required tangent can be approximated to sufficient accuracy using $\widetilde O(d)$ membership-oracle queries, where $d$ is the ambient dimension. Previously, projection-free linear convergence was known only for polyhedral sets or for sets that are both smooth and strongly convex.

\end{abstract}

\section{Introduction}

We consider minimizing a smooth, strongly convex function over a convex body.
Projected gradient descent converges linearly in this setting, but projection
onto the feasible set may be computationally expensive and can dominate the
running time.  This motivates projection-free methods, beginning with the
Frank--Wolfe algorithm \cite{frankwolfe1956}, which replaces projection by
linear optimization over the feasible set.

The ordinary Frank-Wolfe algorithm has a sublinear convergence rate in
general.  For smooth, strongly convex objectives, linear convergence is known
over certain domains, including polytopes and sets that are both smooth and
strongly convex.

We show that projection-free linear convergence is also possible over smooth
convex sets $\K$, which may contain flat boundary pieces.  Such sets satisfy a
uniform interior rolling-ball condition: at every boundary point, a Euclidean
ball of radius $\rho$ can be placed tangent to the boundary and entirely
inside $\K$; see Figure~\ref{fig:smooth-set}.

The key observation is that a supporting halfspace gives a tractable
relaxation of the feasible set.  The rolling-ball condition lets us repair
the relaxed solution while moving it by only $O(s^2/\rho)$ inside a
neighborhood of radius $s$.  Hence one tangent implements the local
linear-optimization step to quadratic accuracy.

The following is our main guarantee.  Let $d\ge2$, let
$\K\subset\R^d$ be a compact $\rho$-smooth convex body, and let $f$ be
differentiable on a neighborhood of $\K$, with $\beta$-Lipschitz gradient
and $\alpha$-strong convexity on $\K$.  Let $G$ bound the gradient norm on
$\K$, and let $H_0$ bound the initial objective gap.  Our algorithm reaches
objective error at most $\eps$ after
\[
  O\!\left(
    \left(\frac{\beta}{\alpha}+\frac{G}{\alpha\rho}\right)
    \log\frac{H_0}{\eps}
  \right)
\]
iterations.  Each iteration uses one gradient computation and
$\widetilde O(d)$ membership queries.  The algorithm uses neither projection
onto $\K$ nor a local or global linear-optimization oracle.

\begin{figure}[H]
\centering
\begin{tikzpicture}[x=1.18cm,y=1.18cm,>=Latex,font=\small]
  \path[fill=setfill]
    (-2.15,-1) -- (2.15,-1)
    arc[start angle=-90,end angle=90,radius=1]
    -- (-2.15,1)
    arc[start angle=90,end angle=270,radius=1] -- cycle;
  \draw[setblue,very thick]
    (-2.15,-1) -- (2.15,-1)
    arc[start angle=-90,end angle=90,radius=1]
    -- (-2.15,1)
    arc[start angle=90,end angle=270,radius=1] -- cycle;
  \coordinate (q) at (2.969,0.574);
  \coordinate (m) at (2.477,0.230);
  \path[fill=repairorange!14] (m) circle[radius=0.60];
  \draw[repairorange,very thick] (m) circle[radius=0.60];
  \draw[gray!75,dashed,thick] (2.28,1.56) -- (3.66,-0.41)
    node[pos=0.07,above right,black] {supporting hyperplane};
  \draw[repairorange,thick,-{Latex[length=2.2mm]}] (m) -- (q)
    node[midway,above left=-1pt] {$\rho$};
  \draw[setblue,very thick,-{Latex[length=2.4mm]}] (q) -- (3.79,1.148)
    node[above right=-2pt] {$n$};
  \fill[black] (q) circle[radius=1.5pt] node[below right=2pt] {$q$};
  \fill[repairorange] (m) circle[radius=1.25pt];
  \node[setblue] at (-0.15,-0.10) {convex body $\K$};
\end{tikzpicture}
\caption{ \small{ A $\rho$-smooth body $\K$ has an interior tangent ball of radius
$\rho$ at every supporting pair $(q,n)$.   A useful example is a parallel body
$\K=C+\rho\B(0,1)$, where $C$ is compact and convex. Flat sides are allowed and show that the
rolling-ball condition is strictly weaker than strong convexity.}}
\label{fig:smooth-set}
\end{figure}

\subsection{Related work}

For an extensive review of the Frank Wolfe method and its extensions, see the recent text \cite{braun2025conditional}, and in the context of online convex optimization, the text \cite{hazan2016oco}.
Our starting point is the first linearly converging projection-free method, using the local linear oracle and shrinking-neighborhood
argument of Garber and Hazan \cite{garberhazan2016}.  Strong convexity
localizes the optimizer to a shrinking neighborhood, and a local
linear-optimization oracle then yields geometric convergence, which they obtained for polytopes.   The required local problem is
\[
  \min_{z\in\K\cap\B(x,s)}\ip{\nabla f(x)}z.
\]
Recent work rediscovers the local linear-minimization method and applies it to more general settings
\cite{richtarik2026}.  Proposition~\ref{prop:transfer} gives the
Garber--Hazan shrinking-neighborhood argument in the additive-error form
needed here; we include the proof for completeness.

Levy and Krause~\cite{levykrause2019} introduced the tangent-plane repair
that underlies our construction.  Given value and gradient access to a
smooth convex function defining $\K$, they locate a boundary point, project
a tentative gradient step onto its supporting hyperplane, and move inward
along the normal until feasibility is restored.  Their key geometric lemma
shows that this repair is quadratic in the tangential displacement, and
they use it to obtain optimal $O(\sqrt T)$ and $O(\log T)$ online regret
bounds.  We use the same geometric fact in a different way.  We minimize
the objective gradient over the intersection of a shrinking Euclidean ball
and one supporting halfspace, and then repair the resulting point using the
certified interior tangent ball.  This gives a local linear oracle with
quadratic additive error.  Combined with the Garber--Hazan
shrinking-neighborhood argument, it yields linear convergence for a fixed
smooth, strongly convex objective.

Liu and Grimmer's $\beta_{\mathrm{LG}}$-smooth sets are exactly our rolling-ball sets with
$\beta_{\mathrm{LG}}=1/\rho$ \cite{liugrimmer2025}.  They prove that the squared Minkowski
gauge is smooth and design projection-free radial methods using gauge and
boundary-normal computations.  With set smoothness alone their rate is
accelerated sublinear; their linear regime requires the set structure to be
both smooth and strongly convex.  In contrast, our set may have flat pieces. 
We also use their
gauge-smoothness theorem to recover the boundary normal from membership.

Gauge-based projection-free methods using membership oracles were developed in the pioneering work  by Mhammedi \cite{mhammedi2022}.  His reduction maps iterates from a
Euclidean ball to the feasible set through approximate gauge projections,
implemented using membership queries, and obtains optimal $\widetilde O(\sqrt T) / \widetilde O(\log T)$ regret for convex  / strongly convex losses over general bounded convex sets.  

More general membership-oracle reductions and cutting-plane methods are
given in \cite{leesidfordvempala2018}; these apply more broadly but have
different algorithms and oracle-complexity guarantees.

Hom-PGD obtains linear convergence conditional on an explicit homeomorphism
whose Jacobian is everywhere nonsingular and Lipschitz and whose
bi-Lipschitz constants are controlled \cite{hompgd2025}.  Such a map is not
provided by a membership oracle for a general convex body; in particular, the
proposed radial gauge map is generally not differentiable at the origin
unless the body is a Euclidean ball.  Thus its assumptions do not give the
membership-oracle result proved here.

\section{Idealized Algorithm and Main Result}\label{sec:local}

We say that a convex body $\K$ is \emph{$\rho$-smooth} if  every supporting hyperplane is tangent to an interior ball of radius
$\rho$.
Formally, for every
$q\in\bd\K$ and every outward supporting unit normal $n$ at $q$, that is,
every unit vector satisfying
\[
  \ip{n}{z-q}\le0
  \qquad\text{for all }z\in\K,
\]
we have
\begin{equation}\label{eq:rolling-ball}
\B(q-\rho n,\rho)\subseteq\K.
\end{equation}

We first describe the essence of the geometric optimization result assuming
\emph{exact} tangent access. Given a feasible point $x\in\K$ and a query
point $y\in\R^d$, one exact tangent query returns the last point
$q\in[x,y]\cap\K$. If $q=y$, this certifies that $y\in\K$; otherwise, the
query also returns an outward supporting unit normal $n$ at $q$.
Section~\ref{sec:membership}
implements this access to the required precision using membership queries
alone.
For any objective $f$ considered below, let
\[
  x^\star\in\arg\min_{x\in\K} f(x),
  \qquad
  f^\star:=f(x^\star).
\]

\begin{theorem}[Linear convergence from one tangent]\label{thm:geometric}
Let $\K\subset\R^d$ be a compact $\rho$-smooth convex body with
$\diam(\K)\le D$, and let $f$ be $\beta$-smooth and $\alpha$-strongly convex
on $\K$.  For an initial point $x_0\in\K$, suppose that
\[
   f(x_0)-f^\star \leq H_0\,
  \qquad
  \sup_{x\in\K}\norm{\nabla f(x)} \leq G,
\]
and define
\[
  \gamma:=\frac{\alpha}{\beta},
  \qquad
  \Gamma:=\frac{G}{\alpha\rho}.
\]
With exact tangent access, the shrinking-radius one-tangent method described
below returns $x_T\in\K$ satisfying 
$  f(x_T)-f^\star\le\eps$ after
\begin{equation}\label{eq:geometric-rate}
  T=O\!\left(
    (\gamma^{-1}+\Gamma)\log\frac{H_0}{\eps}
  \right)
\end{equation}
iterations, for every $0<\eps<H_0$.  Each iteration uses one objective
gradient and at most one exact tangent query.  
\end{theorem}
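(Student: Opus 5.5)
The plan has two parts. First we build a local linear oracle with quadratic additive error from one tangent query. Then we feed it into the Garber--Hazan shrinking-neighborhood argument, in the additive-error form of Proposition~\ref{prop:transfer}.

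\textbf{Step 1 (one-tangent local oracle).} Given $x\in\K$, a radius $s$ and $g=\nabla f(x)$, set $y=x-s\,g/\norm{g}$, which minimizes $\ip{g}{z}$ over $\B(x,s)$. Make one exact tangent query on $[x,y]$. If $q=y$, then $y\in\K$ and $y$ is an exact minimizer over $\K\cap\B(x,s)$. Otherwise we receive $(q,n)$ with $q\in\bd\K$, and we minimize $\ip{g}{z}$ over the relaxation $P=\B(x,s)\cap\{z:\ip{n}{z-q}\le0\}$, which contains $\K\cap\B(x,s)$. This is a closed-form two-dimensional problem in $\mathrm{span}(g,n)$. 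Let $\hat z$ be its minimizer. Repair it by setting $u$ to be the orthogonal projection of $\hat z$ onto the hyperplane $\{\ip{n}{z-q}=0\}$, and then $z=u-t n$ with
\[
t=\rho-\sqrt{\rho^2-\norm{u-q}^2}\le \norm{u-q}^2/\rho .
\]
This places $z$ on the sphere $\partial\B(q-\rho n,\rho)$. By \eqref{eq:rolling-ball}, $z\in\K$; if $\hat z$ already lies strictly inside the halfspace, we instead move it radially toward the ball. Since $q,\hat z\in\B(x,s)$, we have $\norm{u-q}\le 2s$. Hence $\norm{z-\hat z}=O(s^2/\rho)$, provided $s\le\rho$, which we enforce since larger $s$ can be capped. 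It follows that
\[
\ip{g}{z}\le\min_{w\in\K\cap\B(x,s)}\ip{g}{w}+O(Gs^2/\rho),
\qquad z\in\K\cap\B(x,O(s)).
\]
The oracle output may leave $\B(x,s)$ by $O(s^2/\rho)\le O(s)$, so it only costs constant factors in the radius.

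\textbf{Step 2 (shrinking neighborhood).} Define $h_t=f(x_t)-f^\star$. Strong convexity gives $\norm{x_t-x^\star}\le\sqrt{2h_t/\alpha}$, so we choose $s_t=\sqrt{2H_t/\alpha}$, where $H_t$ is the current upper bound on $h_t$. The update is $x_{t+1}=x_t+\eta(z_t-x_t)$ with $\eta\in(0,1]$, which stays in $\K$ by convexity. Since $x^\star\in\K\cap\B(x_t,s_t)$, the $\beta$-smoothness expansion gives
\[
h_{t+1}\le h_t+\eta\ip{g}{x^\star-x_t}+\eta\,\epsilon_t+\tfrac{\beta\eta^2}{2}O(s_t^2)
\le(1-\eta)h_t+\eta\,O\!\left(\tfrac{G}{\rho}\tfrac{H_t}{\alpha}\right)+\eta^2 O\!\left(\tfrac{\beta}{\alpha}H_t\right),
\]
where $\epsilon_t=O(Gs_t^2/\rho)$ is the additive error from Step 1. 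The second term carries the factor $\Gamma$, which is not small, so it is not absorbed automatically. This is where additive error needs care, and I expect it to be the main obstacle.

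My first attempt is to keep the oracle radius separate from the localization radius. Call the oracle with radius $s=c\,s_t$ and compare against the point $x_t+c(x^\star-x_t)$, which is feasible and lies in $\B(x_t,s)$. This gives linear progress $c\,\eta\,h_t$ against an error of $\eta\,O(c^2 G s_t^2/\rho)$ and a curvature term of $\eta^2 O(\beta c^2 s_t^2)$. Choosing $c\asymp 1/(1+\Gamma)$ and $\eta\asymp\min(1,\gamma/c)$ should yield
\[
H_{t+1}=\left(1-\Omega\!\left(\frac{1}{\gamma^{-1}+\Gamma}\right)\right)H_t .
\]
This is essentially the statement of Proposition~\ref{prop:transfer}, which I would invoke directly with error parameter $\epsilon(s)=O(Gs^2/\rho)$.

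\textbf{Step 3 (iteration count).} Iterating from $H_0$ gives $h_T\le\eps$ after $T=O((\gamma^{-1}+\Gamma)\log(H_0/\eps))$ steps. Each step uses one gradient and at most one tangent query, which proves the theorem.
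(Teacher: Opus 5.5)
Your argument follows the paper's route. You make one tangent query, minimize the linear objective over $\B(x,s)$ intersected with the supporting halfspace, repair into the interior tangent ball $\B_q=\B(q-\rho n,\rho)$ at cost $O(s^2/\rho)$, and then apply the Garber--Hazan transfer (Proposition~\ref{prop:transfer}). Your shrink factor $\asymp 1/(1+\Gamma)$ and step $\eta\asymp\min(1,\gamma(1+\Gamma))$ are exactly the $\theta$ and $\eta$ of Appendix~\ref{app:outer-proof}.

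The one real difference is the repair map, and comparing them shows what the paper's choice buys. The paper uses the metric projection $\widehat p=\operatorname{proj}_{\B_q}(p)$. For it, $\dist(p,\B_q)=\sqrt{\rho^2+\norm{p-q}^2}-\rho\le\norm{p-q}^2/(2\rho)$ holds for every $s$. It also keeps $\widehat p$ in $\B(x,s)$. The halfspace constraint is active at $p$, and $p$ is at least as good as the feasible point $q$, so $\ip{q-x}{p-q}\ge0$ and $\ip{q-x}{n}\ge0$. Since $\widehat p-p$ is a nonpositive multiple of $(p-q)+\rho n$ and $\norm{\widehat p-q}\le\norm{p-q}$, one gets $\norm{\widehat p-x}\le\norm{p-x}\le s$.

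Your vertical drop $z=u-tn$ has neither property. It needs $\norm{u-q}\le\rho$, i.e.\ $2s\le\rho$ rather than $s\le\rho$. It can also leave $\B(x,s)$: if $x=q\in\bd\K$ and $g$ is not parallel to $n$, then $\norm{\hat z-x}=s$ and $\norm{z-x}^2=s^2+t^2>s^2$. So Proposition~\ref{prop:transfer}, which requires the output to lie in $\K\cap\B(x,s)$ for all $0<s\le D$, cannot be invoked verbatim as you propose.

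Both points are fixed in one line each:
\begin{itemize}
  \item Pull $z$ back to $\B(x,s)$ along $[x,z]$. This stays in $\K$ because $x,z\in\K$, and it moves $z$ by at most $\norm{z-\hat z}$, since $\norm{z-x}\le s+\norm{z-\hat z}$. The paper does exactly this in the membership version.
  \item For $s\ge\rho/2$, just return $x$, since $\ip{g}{x}-\min_{w\in\K\cap\B(x,s)}\ip{g}{w}\le\norm{g}s\le\frac{2}{\rho}\norm{g}s^2$. This is the precise sense in which ``larger $s$ can be capped.''
\end{itemize}

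Finally, your branch for $\hat z$ strictly inside the halfspace never occurs. Because $q$ is the last feasible point along $-g/\norm{g}$, one has $\ip{n}{-g/\norm{g}}\ge0$. Hence the unconstrained ball minimizer lies on or beyond the hyperplane, and the constraint is active at the relaxed optimum.
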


\paragraph{Initialization.}
The theorem may be used with any valid upper bounds $H_0$ and $G$.  For example, if $\nabla f(x_0) \neq 0$, then $  H_0:= D\norm{ \nabla f(x_0) },  G:=\norm{\nabla f(x_0)}+\beta D $ are valid.

The proof of this theorem is based on an outer reduction to local optimization, based on local linear oracles. The key is how to implement these oracles efficiently, which we do via the tangent oracle and using the smoothness of the set. 

\subsection{The outer reduction}
For $x\in\K$, radius $s>0$, and direction $c\in\R^d$, consider the Garber-Hazan local linear oracle problem
\begin{equation}\label{eq:localcap}
  \min_{z\in\K\cap\B(x,s)}\ip{c}{z}.
\end{equation}
The outer algorithm needs only an additive approximation whose error is
quadratic in $s$.  Fix an accuracy constant $a\ge0$.

\begin{algorithmbox}{Quadratic-accuracy local oracle}
Given $x\in\K$, $0<s\le  D$, and $c\in\R^d$, return
$p\in\K\cap\B(x,s)$ such that
\begin{equation}\label{eq:abstract-local-oracle}
  \ip{c}{p}
  \le
  \min_{z\in\K\cap\B(x,s)}\ip{c}{z}
  +a\norm{c}s^2.
\end{equation}
\end{algorithmbox}

The following proposition, due to Garber--Hazan  \cite{garberhazan2016}, is the shrinking-neighborhood argument that gives the outer reduction in the additive-error
form needed here. The schedule and proof appear in Appendix~\ref{app:outer-proof} for completeness. 

\begin{proposition}[Additive local-oracle transfer]\label{prop:transfer}
Assume $\diam(\K)\le D$, and let $f$ be $\beta$-smooth and
$\alpha$-strongly convex on $\K$, with
$\sup_{x\in\K}\norm{\nabla f(x)}\le G$. Suppose that, for some $a\ge0$,
\eqref{eq:abstract-local-oracle} is available for every $x\in\K$,
$0<s\le D$, and $c\in\R^d$. Given
$x_0\in\K$ and $H_0\ge f(x_0)-f^\star$, a feasible shrinking-radius
scheme using one gradient and one local-oracle call per iteration reaches
error at most $\eps$, for every $0<\eps<H_0$, in number of iterations bounded by 
\begin{equation}\label{eq:abstract-rate}
  O\!\left(\left(\frac{\beta}{\alpha}+\frac{aG}{\alpha}\right)
  \log\frac{H_0}{\eps}\right). 
\end{equation}
\end{proposition}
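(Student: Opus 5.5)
We follow the Garber--Hazan shrinking-neighborhood argument, keeping track of the additive oracle error. Write $h_t:=f(x_t)-f^\star$ and suppose inductively that $h_t\le H_t:=H_0(1-\theta)^t$ for a contraction factor $\theta\in(0,1)$ to be chosen. Strong convexity gives $\norm{x_t-x^\star}^2\le 2h_t/\alpha\le 2H_t/\alpha$. So if we set $s_t:=\min\{D,\sqrt{2H_t/\alpha}\}$, the optimizer lies in $\K\cap\B(x_t,s_t)$. The iteration is: compute $c=\nabla f(x_t)$, call the oracle \eqref{eq:abstract-local-oracle} with $(x_t,s_t,c)$ to get $p_t$, and set $x_{t+1}=x_t+\eta(p_t-x_t)$ with a fixed step $\eta\in(0,1]$. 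Convexity of $\K$ and $p_t\in\K$ give feasibility.

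The per-step inequality comes from $\beta$-smoothness, since $\norm{p_t-x_t}\le s_t$:
\[
f(x_{t+1})\le f(x_t)+\eta\ip{c}{p_t-x_t}+\tfrac{\beta\eta^2}{2}s_t^2 .
\]
Because $x^\star$ is feasible for the local problem, the oracle guarantee and convexity give
\[
\ip{c}{p_t-x_t}\le\ip{c}{x^\star-x_t}+a\norm{c}s_t^2\le -h_t+aGs_t^2 .
\]
Combining these,
\[
h_{t+1}\le(1-\eta)h_t+\eta\Bigl(aG+\tfrac{\beta\eta}{2}\Bigr)s_t^2 .
\]
Substituting $s_t^2\le 2H_t/\alpha$ and $h_t\le H_t$ yields
\[
h_{t+1}\le H_t\Bigl(1-\eta+\tfrac{2\eta aG}{\alpha}+\tfrac{\beta\eta^2}{\alpha}\Bigr).
\]

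The key step, and the main obstacle, is choosing $\eta$ so this factor is at most $1-\eta/2$. Two issues arise.

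\begin{itemize}
\item The term $2\eta aG/\alpha$ does not scale with $\eta^2$, so it cannot be absorbed by shrinking $\eta$ when $aG/\alpha\ge 1/4$. The remedy is to shrink the radius rather than the step. Use $s_t:=\min\{D,\sqrt{2H_t/\alpha}\}$ only as a localization radius. In the error term, $x^\star$ is feasible in a ball of that radius, but the $a\norm{c}s^2$ error then carries the full $s^2$.
\item So instead take the oracle radius $r_t=\lambda s_t$ with $\lambda\le 1$, and compare against the point $z=x_t+\lambda(x^\star-x_t)\in\K\cap\B(x_t,\lambda s_t)$. Then
\[
\ip{c}{p_t-x_t}\le-\lambda h_t+aG\lambda^2 s_t^2 .
\]
With $\eta=1$ this gives
\[
h_{t+1}\le H_t\Bigl(1-\lambda+\tfrac{2\lambda^2(aG+\beta/2)}{\alpha}\Bigr).
\]
Choosing $\lambda=\min\{1,\ \alpha/(4(aG+\beta/2))\}$ makes the factor at most $1-\lambda/2$.
\end{itemize}

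Thus $\theta=\lambda/2=\Omega\bigl(1/(1+\beta/\alpha+aG/\alpha)\bigr)$. The condition $\lambda s_t\le D$ holds automatically. If $s_t$ is capped at $D$, localization still holds because the diameter is $D$, and the same bound applies.

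Induction then gives $h_t\le H_0(1-\theta)^t$, and $t=O(\theta^{-1}\log(H_0/\eps))$ yields \eqref{eq:abstract-rate}; the $\beta/\alpha\ge$ const lower bound absorbs the additive $1$. Each iteration uses one gradient and one oracle call. The only delicate checks are that the radius schedule $r_t$ depends on the known $H_t$ rather than the unknown $h_t$, and that the base case $h_0\le H_0$ holds by assumption.
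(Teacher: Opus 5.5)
Your argument is correct and follows the paper's Garber--Hazan proof: you localize $x^\star$ by strong convexity, call the oracle on a radius shrunk by a constant factor so that the $a\norm{c}s^2$ error is dominated, compare against the scaled point $x_t+\lambda(x^\star-x_t)$, and close with $\beta$-smoothness. The only difference is parametrization. You take $\eta=1$ and let one radius factor $\lambda\asymp\alpha/(aG+\beta)$ absorb both the oracle error and the smoothness term, whereas the paper shrinks the radius by $\theta=\alpha/(4(\alpha+aG))$ and handles the $\beta$ term with a separate step size $\eta=(\alpha+aG)/(\alpha+aG+\beta)$; both give a contraction factor $1-\Theta\bigl(\alpha/(\alpha+\beta+aG)\bigr)$.
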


We now proceed to describe how to implement the local oracle, the main technical point. 

\subsection{The idealized one-tangent step}

\begin{algorithmbox}{One-tangent local oracle}
Given $x\in\K$, $0<s\le D$, and $0 \neq c \in\R^d$ (if $c=0$, return $x$):
\begin{enumerate}
  \item Query the endpoint $x-s\frac{c}{\norm c}$ and write the returned
  point as
  \[
    q=x-\tau\frac{c}{\norm c},
    \qquad 0\le\tau\le s.
  \]
  \item If $\tau=s$, return $q$. Otherwise, let $n$ be the returned outward
  unit normal at $q$.
  \item Compute
  \begin{equation}\label{eq:exactrelax}
    p\in\arg\min\{\ip{c}{z}:z\in\B(x,s),\ \ip{n}{z-q}\le0\}.
  \end{equation}
  \item Return
  \[
    \widehat p
    =\operatorname{proj}_{\B(q-\rho n,\rho)}(p).
  \]
\end{enumerate}
\end{algorithmbox}

Write $\B_q:=\B(q-\rho n,\rho)$.  Step~3 has a closed-form solution.
For a unit vector $n$ and a halfspace $\ip n z\le b_0$, set
$b=b_0-\ip n x$ and $c_\perp=c-\ip c n\,n$.  If the unconstrained ball
minimizer is feasible, use it; otherwise
\[
  p=x+bn-\sqrt{s^2-b^2}\,\frac{c_\perp}{\norm{c_\perp}},
\]
with the last term defined to be zero when $c_\perp=0$.

\begin{figure}[H]
\centering
\begin{tikzpicture}[
    scale=1.25,
    font=\small,
    >=Latex
]
\coordinate (x)    at (-1.30,-0.80);
\coordinate (q)    at (0,0);
\coordinate (p)    at (1,0);
\coordinate (m)    at (0,-1);
\coordinate (phat) at (0.707,-0.293);

\path[fill=setblue!10] (0,-2) circle[radius=2];
\draw[setblue,very thick] (0,-2) circle[radius=2];
\node[setblue] at (-1.35,-2.15) {$\K$};

\draw[gray!70,dashed,thick] (x) circle[radius=2.435];
\node[gray!75] at (-2.55,0.65) {$\B(x,s)$};

\draw[gray!70,dashed,thick] (-2.65,0) -- (1.75,0);
\node[gray!70,above] at (-1.55,0)
    {$\{z:\ip{n}{z-q}=0\}$};

\path[fill=repairorange!12] (m) circle[radius=1];
\draw[repairorange,very thick] (m) circle[radius=1];
\node[repairorange!90!black] at (0.55,-1.35) {$\B_q$};

\draw[repairorange,thick]
    (m) -- (q)
    node[midway,right] {$\rho$};

\draw[setblue,very thick,->]
    (q) -- (0,0.75)
    node[right] {$n$};

\draw[blue!70!black,very thick,->]
    (x) -- (q)
    node[midway,above left=-1pt] {$q-x$};

\draw[red!75!black,very thick,->]
    (q) -- (p)
    node[midway,above=2pt] {$p-q$};

\draw[repairorange,very thick,->]
    (p) -- (phat)
    node[midway,right=2pt] {projection};

\fill[black] (x) circle[radius=1.7pt]
    node[below left] {$x$};

\fill[black] (q) circle[radius=1.7pt]
    node[above left=2pt] {$q$};

\draw[red!75!black,fill=white,very thick]
    (p) circle[radius=2pt]
    node[above right] {$p$};

\fill[repairorange] (m) circle[radius=1.4pt]
    node[below left] {$q-\rho n$};

\fill[querygreen] (phat) circle[radius=2pt];
\node[querygreen!70!black,below right] at (phat)
    {$\widehat p$};

\end{tikzpicture}

\caption{Geometry of Lemma~\ref{lem:exact-tangent}.
The boundary point $q$ lies on the ray from $x$ in direction $-c$, so
$q-x$ is a nonnegative multiple of $-c$.  The relaxed minimizer $p$
lies in the supporting hyperplane, and hence
$\ip{n}{p-q}=0$.  Projecting $p$ onto the certified interior tangent
ball $\B_q=\B(q-\rho n,\rho)$ produces $\widehat p$.  The projection
moves $p$ inward by only $O(\norm{p-q}^2/\rho)$ while keeping
$\widehat p$ inside $\B(x,s)$.}
\label{fig:idealized-oracle}
\end{figure}

\begin{lemma}[Exact tangent accuracy]\label{lem:exact-tangent}
The point $\widehat p$ belongs to $\K\cap\B(x,s)$ and satisfies
\begin{equation}\label{eq:exact-error}
  \ip{c}{\widehat p}
  \le
  \min_{z\in\K\cap\B(x,s)}\ip{c}{z}
  +\frac{2}{\rho}\norm{c}s^2.
\end{equation}
\end{lemma}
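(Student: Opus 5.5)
The plan is to split on whether the tangent query stops short of the endpoint, and in the main case to compare $\widehat p$ with the relaxed minimizer $p$, which already does at least as well as the true local optimum. If $\tau=s$, then $q=x-s\,c/\norm c\in\K\cap\B(x,s)$. This point minimizes $\ip{c}{\cdot}$ over all of $\B(x,s)$, so the error is zero. Assume henceforth $\tau<s$, so $q\in\bd\K$ has outward unit normal $n$. Write $m:=q-\rho n$, $u:=p-q$ and $w:=q-x$, so that $w$ is a nonnegative multiple of $-c$.

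First I record three facts about $p$.
\begin{enumerate}
  \item Since $\K$ lies in the halfspace $\{z:\ip{n}{z-q}\le0\}$, the feasible set of \eqref{eq:exactrelax} contains $\K\cap\B(x,s)$. Hence $\ip{c}{p}\le\min_{z\in\K\cap\B(x,s)}\ip{c}{z}$.
  \item $\ip{n}{u}=0$. In the constrained case this holds by the closed form. If instead the unconstrained minimizer $x-s\,c/\norm c$ is feasible, it equals $q+(s-\tau)(-c/\norm c)$. Feasibility gives $\ip{n}{-c}\le0$, while $x\in\K$ gives $\ip{n}{w}\ge0$, i.e.\ $\ip{n}{-c}\ge0$. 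So $\ip{n}{-c}=0$ and again $\ip{n}{u}=0$.
  \item $q$ is feasible for \eqref{eq:exactrelax}, so $\ip{c}{p}\le\ip{c}{q}$. Because $w\propto -c$, this gives $\ip{w}{u}\ge0$.
\end{enumerate}

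Membership in $\K$ is immediate: $\widehat p\in\B_q\subseteq\K$ by \eqref{eq:rolling-ball}. For membership in $\B(x,s)$, set $r:=\norm{p-m}=\sqrt{\norm u^2+\rho^2}$, using $\ip{n}{u}=0$. Then $\widehat p=m+\rho(p-m)/r$, so
\[
  \widehat p-q=\lambda u-\mu n,\qquad \lambda=\rho/r\in(0,1],\quad \mu=\rho(1-\rho/r)\ge0 .
\]
Expanding with $\ip{u}{n}=0$ gives
\[
  \norm{\widehat p-x}^2=\norm w^2+\norm{\widehat p-q}^2+2\lambda\ip{w}{u}-2\mu\ip{w}{n}.
\]
Projection onto $\B_q$ is nonexpansive and fixes $q\in\B_q$, so $\norm{\widehat p-q}\le\norm u$. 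Combining this with $\lambda\le1$, $\ip wu\ge0$ and $\ip wn\ge0$ yields $\norm{\widehat p-x}^2\le\norm{w+u}^2=\norm{p-x}^2\le s^2$. I expect this step to be the main obstacle, because $x$ need not lie in $\B_q$, so nonexpansiveness around $x$ cannot be used directly. The key is the sign information $\ip{w}{u}\ge0$ and $\ip{w}{n}\ge0$ from fact 3 and from $x\in\K$.

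For accuracy, $\norm{\widehat p-p}=r-\rho=\norm u^2/(r+\rho)\le\norm u^2/(2\rho)$. Also $\norm u\le\norm{p-x}+\norm{x-q}\le 2s$. Hence $\ip{c}{\widehat p}\le\ip{c}{p}+\norm c\,\norm u^2/(2\rho)\le\ip c p+2\norm c s^2/\rho$. Combining this with fact 1 gives \eqref{eq:exact-error}.
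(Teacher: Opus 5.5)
Your argument is essentially the paper's proof. It has the same relaxation bound, the same identity $\ip{n}{p-q}=0$, the same repair distance $\sqrt{\rho^2+\norm{u}^2}-\rho\le\norm{u}^2/(2\rho)\le 2s^2/\rho$, and the same ball-preservation step. Your decomposition $\widehat p-q=\lambda u-\mu n$, together with $\ip{w}{u}\ge0$ and $\ip{w}{n}\ge0$, is the paper's observation that $\widehat p-p$ is a nonpositive multiple of $u+\rho n$.

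One step fails as written, in fact~2. When the unconstrained minimizer is feasible, you pass from $\ip{n}{w}\ge0$ to $\ip{n}{-c}\ge0$. But $w=\tau d_c$ with $d_c:=-c/\norm{c}$, so this needs $\tau>0$. If $\tau=0$, i.e.\ $x=q\in\bd\K$, then $w=0$ and the inequality tells you nothing. The outer scheme does not exclude this case, e.g.\ when $x_0\in\bd\K$.

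The missing ingredient is what the paper invokes by saying $q$ is the \emph{last} feasible point in direction $d_c$, combined with the rolling ball. If $\ip{n}{d_c}<0$, then for small $\eps>0$
\[
  \norm{q+\eps d_c-(q-\rho n)}^2=\rho^2+2\eps\rho\ip{n}{d_c}+\eps^2<\rho^2,
\]
so $q+\eps d_c\in\B_q\subseteq\K$, contradicting the definition of $q$. This genuinely uses smoothness: at a polytope vertex, a valid outward normal can have $\ip{n}{d_c}<0$ even though $d_c$ immediately leaves the set.

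Alternatively, when $w=0$ you can avoid the equality altogether. Feasibility of $p$ gives $\ip{n}{u}\le0$, hence $\dist(p,\B_q)\le\sqrt{\rho^2+\norm{u}^2}-\rho$. And $\norm{\widehat p-x}=\norm{\widehat p-q}\le\norm{u}\le s$ by nonexpansiveness. With either patch your proof is complete.
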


\begin{proof}
The true local cap is feasible for \eqref{eq:exactrelax}, so
\begin{equation}\label{eq:p-relax-opt}
  \ip{c}{p}\le
  \min_{z\in\K\cap\B(x,s)}\ip{c}{z}.
\end{equation}
We now go through a sequence of geometric steps as follows. 

\paragraph{The halfspace constraint is active at $p$.}
Let $d_c:=-c/\norm c$, so $q=x+\tau d_c$ with $\tau<s$.
Since $q$ is the last feasible point in direction $d_c$, clearly
$\ip n{d_c}\ge0$.  Hence the unconstrained ball minimizer $x+sd_c$
satisfies
\[
  \ip n{x+sd_c-q}=(s-\tau)\ip n{d_c}\ge0.
\]
It therefore lies outside or on the supporting halfspace, so the halfspace
constraint in \eqref{eq:exactrelax} is active:
\begin{equation}\label{eq:tangent-p}
  \ip n{p-q}=0.
\end{equation}

\paragraph{The repair distance.}
Both $p$ and $q$ lie in $\B(x,s)$, so $\norm{p-q}\le2s$.
Using the elementary inequality when $a>0$
\[
  \sqrt{a^2+b^2}-a
  =\frac{b^2}{\sqrt{a^2+b^2}+a}
  \le\frac{b^2}{2a},
\]
we obtain
\begin{align}
  \dist(p,\B_q)
  &=\sqrt{\rho^2+\norm{p-q}^2}-\rho \notag\\
  &\le\frac{\norm{p-q}^2}{2\rho}
  \le\frac{2s^2}{\rho}.
\label{eq:exact-distance}
\end{align}

\paragraph{Preservation of the local ball.}
Optimality of $p$ against the feasible point $q$, together with the preceding
paragraph, gives
\[
  \ip{q-x}{p-q}\ge0,
  \qquad
  \ip{q-x}{n}\ge0.
\]
The repair vector $\widehat p-p$ is a nonpositive multiple of
\[
  p-(q-\rho n)=(p-q)+\rho n,
\]
and therefore
\[
  \ip{q-x}{\widehat p-p}\le0.
\]
Also, since $q\in\B_q$, projection cannot increase the distance to $q$:
\[
  \norm{\widehat p-q}\le\norm{p-q}.
\]
Consequently,
\[
  \norm{\widehat p-x}^2-\norm{p-x}^2
  =
  \norm{\widehat p-q}^2-\norm{p-q}^2
  +2\ip{q-x}{\widehat p-p}
  \le0.
\]
Thus $\widehat p\in\B(x,s)$.  Since also
$\widehat p\in\B_q\subseteq\K$, we conclude that
$\widehat p\in\K\cap\B(x,s)$.

 \paragraph{Objective accuracy.}
Finally,
\begin{align*}
  \ip c{\widehat p}
  &\le \ip c p+\norm c\,\norm{\widehat p-p}\\
  &\le
  \min_{z\in\K\cap\B(x,s)}\ip c z
  +\frac{2}{\rho}\norm c\,s^2,
\end{align*}
where the second inequality follows from
\eqref{eq:p-relax-opt} and \eqref{eq:exact-distance}.
This proves \eqref{eq:exact-error}.
 
\end{proof}

\begin{proof}[Proof of Theorem~\ref{thm:geometric}]
Consider any local call $(x,s,c)$. If $c=0$, return $x$. If $\tau=s$, then
$q=x-sc/\norm c$ minimizes the linear objective over the whole ball
$\B(x,s)$ and is an exact solution of \eqref{eq:localcap}. Otherwise,
Lemma~\ref{lem:exact-tangent} implements
\eqref{eq:abstract-local-oracle} with $a=2/\rho$.  Applying
Proposition~\ref{prop:transfer} proves \eqref{eq:geometric-rate}.
Feasibility and the stated oracle usage follow directly from
the one-tangent construction.
\end{proof}

\section{Implementation from membership queries}\label{sec:membership}
To implement the method using membership queries, decrease $r$ if necessary
and assume that known radii $0<r\le\rho\le R$ satisfy
\begin{equation}\label{eq:sandwich}
  \B(0,r)\subseteq\K\subseteq\B(0,R).
\end{equation}
Since $\diam(\K)\le2R$, Theorem~\ref{thm:geometric} applies with diameter
bound $2R$.  Assume access to the exact oracle
$\MEM(z)=\mathbf 1\{z\in\K\}$.

The reduction has two ingredients.  First, membership bisection locates the
last feasible point $q$ on the segment from $x$ toward $x-sc/\norm c$.
Second, membership also gives arbitrarily accurate values of the Minkowski
gauge.  For $w\in\R^d$, define
\[
  p_{\K}(w):=\inf\{t>0:w\in t\K\}.
\]
For every $t>0$,
\[
  p_{\K}(w)\le t
  \quad\Longleftrightarrow\quad
  \MEM(w/t)=1,
\]
so gauge values can be computed by bisection.  The squared gauge is smooth
under the rolling-ball condition \cite{liugrimmer2025}, and its normalized
gradient at $q$ is the outward unit normal.  Centered finite differences
therefore recover the normal using $2d$ gauge evaluations.
Appendix~\ref{app:membership-details} gives the details and shows how to
repair the approximate tangent while preserving exact feasibility.

\begin{proposition}[Membership implementation of the local oracle]
\label{prop:membership-local}
Let $\K$ be a $\rho$-smooth convex body satisfying the ball
sandwich~\eqref{eq:sandwich}.  For every $x\in\K$, $0<s\le2R$, and
$c\in\R^d$, there is a deterministic algorithm using queries to the exact
membership oracle $\MEM$ that returns
$\widehat p\in\K\cap\B(x,s)$ satisfying
\[
  \ip c{\widehat p}
  \le
  \min_{z\in\K\cap\B(x,s)}\ip c z
  +\frac{9}{\rho}\norm c\,s^2.
\]
The number of membership queries is
\[
  O\!\left(
    d\log\left(d+\frac Rr+\frac Rs\right)
  \right).
\]
\end{proposition}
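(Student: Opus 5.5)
The plan is to emulate the one-tangent oracle of Lemma~\ref{lem:exact-tangent} with approximate data from membership, and to make the repair robust to the errors. The approximate point must stay exactly feasible, and the additive error should rise only from $2/\rho$ to $9/\rho$.

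\textbf{Step 1: boundary point.} Bisect on the segment from $x$ to $x-sc/\norm c$ with $\MEM$. If the endpoint is feasible, return it; it is exactly optimal over $\B(x,s)$. Otherwise, after $O(\log(R/(r\delta)))$ queries we have a feasible $q_-=x+\tau_- d_c$ and an infeasible $q_+$ at distance at most $\delta$ beyond it. The true boundary point $q$ lies between them. We will choose $\delta$ as a small multiple of $s^2/R$ or of $s\,\eta$, with $\eta$ the normal accuracy fixed below. This accounts for the $\log(R/s)$ term.

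\textbf{Step 2: normal.} For $w\neq0$ we evaluate $p_\K(w)$ to relative accuracy $\epsilon_g$ by bisection on $t$. This costs $O(\log(R/(r\epsilon_g)))$ queries, since $|w|/R\le p_\K(w)\le |w|/r$. By the Liu--Grimmer theorem, $\phi=\tfrac12p_\K^2$ has Lipschitz gradient with constant $L=O(R^2/(r^2\rho)\cdot\text{poly})$; we only need some polynomial bound in $R/r$ and $1/\rho$. We then take centered differences of $\phi$ at $q_-$ with step $h$, using $2d$ gauge evaluations. The gradient error per coordinate is at most $Lh/2+\epsilon_g'/h$. Summing over coordinates gives $\norm{g-\nabla\phi(q_-)}\le\sqrt d(Lh+\epsilon_g'/h)$. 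The true gradient has norm at least about $1/R$ times $p_\K(q_-)\approx1$.

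Normalizing gives an approximate unit normal $\tilde n$ with $\norm{\tilde n-n}\le\eta$. Here $n$ is the outward normal at the boundary point $q$ on the ray through $q_-$, and we use Lipschitzness of $\nabla\phi$ again to pass from $q_-$ to $q$. Choosing $h$ and $\epsilon_g$ polynomially small in $\eta,r/R,1/d$ costs $O(d\log(d+R/r+1/\eta))$ queries. We set $\eta=\Theta(s/\rho)$, capped at a constant, which contributes $\log(R/s)$ since $\rho\le R$.

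\textbf{Step 3: robust repair (the main obstacle).} We solve the closed-form relaxation with halfspace $\ip{\tilde n}{z-q_-}\le\theta$, where the slack $\theta=O(\eta s+\delta)$ is chosen so that the true cap $\K\cap\B(x,s)$ is still contained in it. To check this, for $z\in\B(x,s)$ we have $\ip{\tilde n}{z-q_-}\le\ip n{z-q}+\eta\cdot 2s+\delta$. This gives \eqref{eq:p-relax-opt} verbatim for the resulting point $p$.

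We then project $p$ onto the shrunken ball $\B(q_--\rho'\tilde n,\rho')$, with $\rho'=\rho/2$ say. This ball must lie in $\K$, and that is the delicate part. One route is to write $q_--\rho'\tilde n=(q-\rho n)+(\rho-\rho')n+O(\rho'\eta+\delta)$. The center is then within distance $\rho-\rho'$ of the center of $\B_q$ provided $\rho'\eta+\delta\le$ some fraction of $(\rho-\rho')$, so it lies in $\B_q$ with radius margin. An alternative is to use $\B(q_--\rho'\tilde n,\rho'(1-O(\eta)))$ together with convexity of $\K$ to absorb the perturbation.

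Ball preservation in $\B(x,s)$ must be redone, since the two inequalities $\ip{q-x}{p-q}\ge0$ and $\ip{q-x}{n}\ge0$ now hold only up to $O(\eta s^2+\delta s)$. If needed, a final contraction toward $x$ by a factor $1-O(s/\rho)$ restores $\widehat p\in\B(x,s)$. This stays feasible because the segment from $x$ lies in $\K$, and it costs $O(\norm c s^2/\rho)$ in objective.

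\textbf{Step 4: accounting.} Adding the contributions gives the total objective error. The repair distance is $\norm{p-q_-}^2/(2\rho')\le 4s^2/\rho$, using $\norm{p-q_-}\le 2s$. The slack contributes $\norm c\theta=O(\norm c s^2/\rho)$, and the contraction contributes $O(\norm c s^2/\rho)$. Tuning the constants gives the bound with $9/\rho$. The query count is $O(d\log(d+R/r+R/s))$, dominated by the $2d$ gauge bisections.
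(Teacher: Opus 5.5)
There is a genuine gap in Step~3, at the point you yourself flag as delicate: showing that the repair ball lies in $\K$ while losing only $O(s^2/\rho)$.

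\textbf{The containment claim fails.} With normal accuracy $\eta=\Theta(s/\rho)$, write $q_{-}-\rho'\tilde n=(q-\rho n)+(\rho-\rho')n+e$. The center error $e=(q_{-}-q)-\rho'(\tilde n-n)$ has size up to $\delta+\rho'\eta=\Theta(s)$, so it is first order. Your first route claims the center is then within distance $\rho-\rho'$ of $q-\rho n$. That does not follow and is false in general. If $e\perp n$ with $\norm{e}=\epsilon>0$, the distance is $\sqrt{(\rho-\rho')^2+\epsilon^2}>\rho-\rho'$, so $\B(q_{-}-\rho'\tilde n,\rho')\not\subseteq\B_q$. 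When $\K$ agrees with $\B_q$ near $q$ (e.g.\ $\K$ a ball of radius $\rho$), the repaired point can then be infeasible.

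\textbf{The alternative loses quadratic accuracy.} Using radius $\rho'(1-O(\eta))$ is feasible, but it removes $\Theta(\rho'\eta)=\Theta(s)$ of radius. Since the algorithm does not know the realized error, it must always pay this. The repair distance grows by $\Theta(s)$. For $-c/\norm{c}=n$ with $x$ at depth $s/2$ below $q$, the objective loss is $\Theta(\norm{c}s)$. That is linear rather than quadratic in $s$, and the outer reduction cannot absorb it. Your Step~4 accounting also omits any radius shrinkage, so the claimed constant $9/\rho$ is unsupported.

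\textbf{The paper's fix.} Query cost is only logarithmic in the accuracy, so ask Lemma~\ref{lem:membership-tangent} for $\norm{\widehat q-q}\le\delta$ and $\norm{\tilde n-n}\le\delta/(2R+\rho)$, with $\delta=\frac14\min\{r,s^2/\rho\}$. This gives:
\begin{itemize}
\item the center $\widehat q-\rho\tilde n$ is within $2\delta$ of $q-\rho n$, so the plain triangle inequality gives $\B(\widehat q-\rho\tilde n,\rho-2\delta)\subseteq\B_q\subseteq\K$;
\item the halfspace $\{\ip{\tilde n}{z-\widehat q}\le2\delta\}$ contains all of $\K$;
\item the bound $\dist(q+v,\B_q)\le[\ip{n}{v}]_++\norm{v}^2/(2\rho)$ yields $\norm{u-p}\le2s^2/\rho+9\delta$;
\item the radial projection at most doubles this, giving $4s^2/\rho+18\delta\le9s^2/\rho$.
\end{itemize}
Keeping radius close to $\rho$ gives a curvature term of $2s^2/\rho$ instead of your $4s^2/\rho$, which leaves comfortable room for the constant $9$. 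The query count is unchanged, since $\log(R/\delta)=O(\log(R/r+R/s))$.

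\textbf{A second fix within your route.} You can instead keep $\eta=\Theta(s/\rho)$ and $\rho'=\rho/2$, but prove containment to second order. Since $q_{-}\in\K$, $\ip{n}{e}=\ip{n}{q_{-}-q}+\rho'(1-\ip{n}{\tilde n})\le\rho'\eta^2/2$. Hence $\norm{(\rho-\rho')n+e}\le(\rho-\rho')+\rho'\eta^2/2+\norm{e}^2/(2(\rho-\rho'))$. So a radius loss of $O(s^2/\rho)$ suffices when $s\le\rho$ and $\eta$ is a small multiple of $s/\rho$. You must then treat $s\ge\rho$ separately, because your capped $\eta=\Theta(1)$ makes the center error comparable to $\rho$ there. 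That case is easy: returning $x$ already has error at most $\norm{c}s\le\norm{c}s^2/\rho$.

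Your observation that the halfspace need only contain the cap $\K\cap\B(x,s)$ is correct. It means the normal error is multiplied by $2s$ rather than $2R$.
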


\begin{corollary}[Membership-oracle implementation]\label{cor:membership}
Under the assumptions of Theorem~\ref{thm:geometric}, additionally suppose
that the ball sandwich~\eqref{eq:sandwich} is known and that an exact
membership oracle for $\K$ is available.  Writing
\[
  \gamma:=\frac{\alpha}{\beta},
  \qquad
  \Gamma:=\frac{G}{\alpha\rho},
\]
the shrinking-radius method reaches objective error at most $\eps$, for every
$0<\eps<H_0$, in
\begin{equation}\label{eq:Tbound}
  T=O\!\left(
    (\gamma^{-1}+\Gamma)\log\frac{H_0}{\eps}
  \right)
\end{equation}
iterations.  Each iteration uses one objective gradient and
$O(d\log Q_\eps)$ exact membership queries, where
\begin{equation}\label{eq:Qeps}
  Q_\eps=
  d+\frac Rr+
  \frac{\alpha R^2(1+\Gamma)^2}{\eps}.
\end{equation}
Hence the total number of membership queries is $O(dT\log Q_\eps)$.
The method uses no objective values, no projection onto $\K$, and no local
or global linear-optimization oracle.
\end{corollary}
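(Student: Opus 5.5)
The corollary follows by combining Proposition~\ref{prop:membership-local} with Proposition~\ref{prop:transfer}. The remaining work is to bound the smallest radius the outer scheme ever uses, since that radius controls the per-iteration query count.

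\emph{Step 1: plug the membership oracle into the outer reduction.} By \eqref{eq:sandwich}, $\diam(\K)\le 2R$. Proposition~\ref{prop:membership-local} therefore supplies the oracle \eqref{eq:abstract-local-oracle} for every $x\in\K$, $0<s\le 2R$ and $c\in\R^d$, with $a=9/\rho$. We take $D=2R$ in Proposition~\ref{prop:transfer}. Its iteration bound then reads
\[
O\!\left(\left(\frac{\beta}{\alpha}+\frac{9G}{\alpha\rho}\right)\log\frac{H_0}{\eps}\right)=O\!\left((\gamma^{-1}+\Gamma)\log\frac{H_0}{\eps}\right),
\]
which is \eqref{eq:Tbound}. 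Every returned point lies in $\K$, so all iterates are feasible. Each iteration uses one gradient, $c=\nabla f(x_t)$, and one membership-implemented local call; no objective values, projections or linear-optimization oracles appear anywhere.

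\emph{Step 2: the query count per iteration.} A local call with radius $s_t$ costs $O(d\log(d+R/r+R/s_t))$ queries. It suffices to show that every radius used before the error drops to $\eps$ satisfies
\[
\frac{R}{s_t}\le \mathrm{poly}\!\left(\frac{\alpha R^2(1+\Gamma)^2}{\eps}\right).
\]
Then $\log(R/s_t)=O(\log Q_\eps)$, and the total count $O(dT\log Q_\eps)$ follows.

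To get this bound, I would open the schedule in Appendix~\ref{app:outer-proof}. In the Garber--Hazan scheme the radius is tied to the current gap bound $H_t$ through strong convexity, $\norm{x_t-x^\star}\le\sqrt{2H_t/\alpha}$, so it takes the form $s_t=\kappa\sqrt{H_t/\alpha}$. Here $\kappa$ is a step factor of order $\min\{\gamma,1/\Gamma\}$ up to constants. This form is forced by balancing the smoothness term $\beta s^2$ and the error term $aGs^2$ against the linear decrease.

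\emph{Step 3: lower-bound $s_t$.} We run only while $H_t\ge\eps$, which gives $s_t\ge \kappa\sqrt{\eps/\alpha}$. We also need the reverse case $\kappa\gtrsim 1/(1+\Gamma)$. This is the main obstacle: the factor $\gamma$ could also enter $\kappa$, and $\gamma$ does not appear in $Q_\eps$. Presumably the schedule absorbs it, because the $\gamma$-dependence comes from a step-size choice $\eta\le \alpha/\beta$ that multiplies the step rather than the radius, or because $s_t$ is capped at $2R$. Assuming this,
\[
\frac{R^2}{s_t^2}\le \frac{\alpha R^2(1+\Gamma)^2}{\eps}\le Q_\eps .
\]
This gives $\log(R/s_t)\le\tfrac12\log Q_\eps$. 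If the appendix's schedule does carry a $\gamma$ factor in the radius, I would fall back on checking whether $s$ can be decoupled from the step size. Failing that, I would accept an extra $\log(1/\gamma)$ term and verify it is dominated.

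The final iterate is handled in the same way, since it uses a radius computed from some $H_t\ge\eps$.
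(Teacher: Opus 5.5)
\textbf{Same route, with Step~3 left unverified.} Your route is the paper's: instantiate Proposition~\ref{prop:transfer} with $D=2R$ and the membership oracle of Proposition~\ref{prop:membership-local} ($a=9/\rho$), then lower-bound the radii used before termination. The only substantive step is Step~3, and you leave it as an assumption. Your first description of the schedule is also off: the radius factor is not of order $\min\{\gamma,1/\Gamma\}$.

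\textbf{What the schedule actually is.} In Appendix~\ref{app:outer-proof} the radius is $s_t=\min\{D,\theta\sqrt{2\Delta_t/\alpha}\}$, where:
\[
  \theta=\frac{1}{4(1+b)},\qquad b=\frac{aG}{\alpha}=9\Gamma,\qquad \Delta_t=(1-\sigma)^tH_0 .
\]
The sequence $\Delta_t$ is fixed in advance. The ratio $\beta/\alpha$ enters only through the step $\eta=(1+b)/(1+b+\beta/\alpha)$. The reason is that the smoothness penalty $\tfrac{\beta\eta^2}{2}s_t^2$ is quadratic in $\eta$. By contrast, the guaranteed decrease $\eta\theta h_t$ and the oracle error $\eta a\norm{g_t}s_t^2$ are both linear in $\eta$. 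So only the oracle error must be controlled through the radius, and the smoothness term is absorbed by shrinking $\eta$. Your ``presumably'' branch is therefore the correct one.

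\textbf{Resulting bound.} Stop at the first $t$ with $\Delta_t\le\eps$ (your rule, with $H_t$ read as $\Delta_t$). Then every radius actually used has $\Delta_t>\eps$, so
\[
  \frac{R^2}{s_t^2}\le\max\Bigl\{\frac14,\ \frac{8(1+9\Gamma)^2\alpha R^2}{\eps}\Bigr\}
  =O\Bigl(1+\frac{\alpha R^2(1+\Gamma)^2}{\eps}\Bigr).
\]
This is exactly the paper's bound, and it yields $O(d\log Q_\eps)$ queries per call.

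\textbf{Your second fallback would fail.} An extra $\log(1/\gamma)$ cannot be absorbed into $\log Q_\eps$, because $\beta/\alpha$ is not controlled by $d$, $R/r$, $\Gamma$ or $\alpha R^2/\eps$. For instance, take $f(x)=\tfrac\alpha2\norm x^2+\sqrt{x_1^2+\mu^2}$. For every $\mu>0$ it has $G\le\alpha R+1$ on $\K$, but $\beta=\alpha+1/\mu\to\infty$ as $\mu\to0$. So decoupling the radius from the step size is essential for \eqref{eq:Qeps}, not merely convenient.

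The same a priori choice of $\Delta_t$ is also what justifies your Step~1 claim that no objective values are used. A schedule driven by measured gaps would need them.
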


\begin{proof}
Proposition~\ref{prop:membership-local} implements
\eqref{eq:abstract-local-oracle} with $a=9/\rho$, so
Proposition~\ref{prop:transfer} gives \eqref{eq:Tbound}.  Before termination,
the radius schedule in Appendix~\ref{app:outer-proof} satisfies
\[
  \frac{R^2}{s_t^2}
  =O\!\left(
    1+\frac{\alpha R^2(1+\Gamma)^2}{\eps}
  \right).
\]
Substitution in the query bound of
Proposition~\ref{prop:membership-local} gives
$O(d\log Q_\eps)$ membership queries per iteration.
\end{proof}

\section{Conclusion}

We describe a simple projection-free linearly converging algorithm whose main primitive is tangent computation, and relies on the underlying set being smooth. 

The one-tangent argument relies essentially on smoothness of the feasible
set.  At a corner, a single tangent can differ from the set by $\Theta(s)$
rather than $O(s^2)$, and our argument no longer applies.
Approximate membership, adaptive parameter choices, and multi-tangent repairs
for nonsmooth sets are natural further directions.

\bibliographystyle{plain}
\bibliography{main}

\appendix

\section{The additive local-oracle reduction}
\label{app:outer-proof}

\begin{proof}[Proof of Proposition~\ref{prop:transfer}]
Write
\[
  \kappa:=\frac{\beta}{\alpha},
  \qquad
  b:=\frac{aG}{\alpha},
\]
and set
\[
  \theta:=\frac{1}{4(1+b)},
  \qquad
  \eta:=\frac{1+b}{1+b+\kappa},
  \qquad
  \sigma:=\frac{1}{16(1+b+\kappa)}
          =\frac{\eta\theta}{4}.
\]
Let
\[
  h_t:=f(x_t)-f^\star,
  \qquad
  \Delta_t:=(1-\sigma)^tH_0,
\]
and choose
\[
  s_t:=\min\left\{
    D,\,
    \theta\sqrt{\frac{2\Delta_t}{\alpha}}
  \right\}.
\]
Given \(x_t\), call the local oracle with
\((x_t,s_t,\nabla f(x_t))\), obtaining \(p_t\), and update
\[
  x_{t+1}=x_t+\eta(p_t-x_t).
\]

We prove by induction that \(h_t\le\Delta_t\). Suppose this holds at
time \(t\). Strong convexity and constrained optimality of \(x^\star\)
give
\[
  \norm{x_t-x^\star}
  \le\sqrt{\frac{2h_t}{\alpha}}
  \le\sqrt{\frac{2\Delta_t}{\alpha}}.
\]
For \(x_t\ne x^\star\), let
\[
  \lambda_t:=
  \min\left\{1,\frac{s_t}{\norm{x_t-x^\star}}\right\},
\]
and take \(\lambda_t=1\) when \(x_t=x^\star\). Since
\(D\ge\norm{x_t-x^\star}\), we have \(\lambda_t\ge\theta\). Thus
\[
  y_t:=x_t+\lambda_t(x^\star-x_t)
  \in\K\cap\B(x_t,s_t).
\]

Writing \(g_t=\nabla f(x_t)\), the local-oracle guarantee and convexity
give
\begin{align*}
  \ip{g_t}{p_t-x_t}
  &\le
  \lambda_t\ip{g_t}{x^\star-x_t}
  +a\norm{g_t}s_t^2\\
  &\le
  -\theta h_t+2b\theta^2\Delta_t.
\end{align*}
Consequently, smoothness implies
\begin{align*}
  h_{t+1}
  &\le
  h_t+\eta\ip{g_t}{p_t-x_t}
  +\frac{\beta\eta^2}{2}s_t^2\\
  &\le
  (1-\eta\theta)h_t
  +\left(2\eta b\theta^2
    +\kappa\eta^2\theta^2\right)\Delta_t\\
  &\le
  \left[
    1-\eta\theta
    \left(1-2b\theta-\kappa\eta\theta\right)
  \right]\Delta_t\\
  &\le
  (1-\sigma)\Delta_t
  =\Delta_{t+1},
\end{align*}
where the last inequality uses
\[
  2b\theta\le\frac12,
  \qquad
  \kappa\eta\theta\le\frac14.
\]
This proves the induction. The iterates are feasible because
\(x_t,p_t\in\K\) and \(0<\eta\le1\). Finally,
\[
  h_T\le(1-\sigma)^T H_0
  \le e^{-\sigma T}H_0,
\]
and
\[
  \frac1{\sigma}
  =16(1+b+\kappa)
  =O\left(
    \frac{\beta}{\alpha}+\frac{aG}{\alpha}
  \right).
\]
The claimed iteration bound follows.
\end{proof}

\section{Membership-oracle details}\label{app:membership-details}

Let $\psi(w):=p_{\K}(w)^2/2$, where $p_{\K}$ is the Minkowski gauge
defined in Section~\ref{sec:membership}. Liu and
Grimmer~\cite{liugrimmer2025} show that
\[
  \operatorname{Lip}(\nabla\psi)
  \le L_\psi:=\frac{r+R^2/\rho}{r^3},
  \qquad
  \nabla\psi(q)=\frac{n(q)}{\ip{n(q)}q},
  \qquad
  r\le\ip{n(q)}q\le R
\]
for every $q\in\bd\K$, where $n(q)$ is the outward unit normal. Thus
$\norm{\nabla\psi(q)}\ge1/R$ and its normalized gradient is $n(q)$.
Moreover,
\[
  p_{\K}(w)\le t
  \quad\Longleftrightarrow\quad
  \MEM(w/t)=1,
\]
so membership bisection evaluates $\psi$ to arbitrary accuracy.

\begin{lemma}[Tangent approximation from membership]
\label{lem:membership-tangent}
Let $x\in\K$, let $y\notin\K$ satisfy $\norm{y-x}\le2R$, and let $q$ be
the last feasible point on $[x,y]$. For every $0<\delta\le r$, there is
a deterministic algorithm using queries to the exact membership oracle
$\MEM$ that returns a feasible $\widehat q\in[x,q]$ and a unit vector
$\widetilde n$ satisfying
\[
  \norm{\widehat q-q}\le\delta,
  \qquad
  \norm{\widetilde n-n(q)}
  \le\frac{\delta}{2R+\rho}.
\]
It uses
\[
  O\!\left(d\log\left(d+\frac Rr+\frac R\delta\right)\right)
\]
membership queries.
\end{lemma}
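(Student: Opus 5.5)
The plan has two phases: a bisection along the segment to find $\widehat q$, then finite differences of the squared gauge $\psi$ to recover $\widetilde n$.

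\textbf{Phase 1: locating $\widehat q$.}
Parametrize the segment as $z(t)=x+t(y-x)$ for $t\in[0,1]$. Its last feasible point is $q=z(t^\star)$. Since $\K$ is convex and closed, $\MEM(z(t))=1$ exactly when $t\le t^\star$. We bisect, maintaining a feasible lower endpoint $t_\ell$ and an infeasible upper endpoint $t_u$. We stop once $(t_u-t_\ell)\norm{y-x}\le\delta'$, where $\delta'\le\delta$ is a smaller accuracy chosen below. Set $\widehat q=z(t_\ell)$. It is feasible, lies in $[x,q]$, and satisfies $\norm{\widehat q-q}\le\delta'$. This costs $O(\log(R/\delta'))$ queries.

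\textbf{Phase 2: the normal via finite differences.}
By the Liu--Grimmer facts quoted above, $\psi$ has $L_\psi$-Lipschitz gradient, and $\nabla\psi(q)=n(q)/\ip{n(q)}q$ with $\norm{\nabla\psi(q)}\ge1/R$. We form centered differences at $\widehat q$:
\[
g_i=\frac{\widetilde\psi(\widehat q+he_i)-\widetilde\psi(\widehat q-he_i)}{2h},
\]
where each $\widetilde\psi$ is the value $\psi$ computed by gauge bisection to additive accuracy $\epsilon_0$.

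Gradient Lipschitzness gives $|\psi(w\pm he_i)-\psi(w)\mp h\partial_i\psi(w)|\le L_\psi h^2/2$. Hence
\[
\norm{g-\nabla\psi(\widehat q)}\le\sqrt d\,(L_\psi h/2+\epsilon_0/h),
\]
and therefore
\[
\norm{g-\nabla\psi(q)}\le \sqrt d\,(L_\psi h/2+\epsilon_0/h)+L_\psi\delta'.
\]
Set $\widetilde n=g/\norm g$. Normalization obeys $\norm{u/\norm u-v/\norm v}\le2\norm{u-v}/\norm v$. With $\norm{\nabla\psi(q)}\ge1/R$, this gives
\[
\norm{\widetilde n-n(q)}\le2R\bigl(\sqrt d(L_\psi h/2+\epsilon_0/h)+L_\psi\delta'\bigr).
\]
We choose $h$, $\epsilon_0$ and $\delta'$ so that this is at most $\delta/(2R+\rho)$. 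Concretely, take $\delta'\asymp\delta/(R(2R+\rho)L_\psi)$ and $h\asymp\delta/(\sqrt d\,R(2R+\rho)L_\psi)$, then $\epsilon_0\asymp h\delta/(\sqrt d\,R(2R+\rho))$.

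\textbf{Query count.}
The gauge on the sample points $\widehat q\pm he_i$ is bounded by $(R+h)/r$ or so, since $\B(0,r)\subseteq\K$. Thus bisecting $p_\K$ to accuracy $\epsilon_0$, and hence $\psi$ to accuracy $\epsilon_0$, costs $O(\log(R/(r\epsilon_0)))$ queries. There are $2d$ such evaluations.

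\textbf{Main obstacle.}
The main obstacle is the bookkeeping that shows every logarithm has an argument polynomial in $d$, $R/r$ and $R/\delta$. The constant $L_\psi=(r+R^2/\rho)/r^3$ involves $\rho$. Using $r\le\rho\le R$, we get $L_\psi\le(r+R^2/r)/r^3\le 2R^2/r^4$ and $2R+\rho\le3R$. Then $1/h$, $1/\delta'$ and $1/\epsilon_0$ are polynomial in $d$, $R/r$ and $R/\delta$, up to normalization by $R$. So each log is $O(\log(d+R/r+R/\delta))$.

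Summing, the $2d$ gauge evaluations dominate and give the claimed $O(d\log(d+R/r+R/\delta))$ total. We must also check that $\psi$ stays well defined and Lipschitz-smooth near $\widehat q$. This holds because the Liu--Grimmer bound is global away from the origin, and $\norm{\widehat q}\ge r-\delta'>0$ is not even needed, since it holds on all of $\R^d\setminus\{0\}$.
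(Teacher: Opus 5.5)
Your proposal is correct and follows the paper's proof essentially step for step. It uses segment bisection for a feasible $\widehat q\in[x,q]$, then centered finite differences of $\psi$ at $\widehat q$ with the same error split $\sqrt d\,(L_\psi h/2+\tau/h)+L_\psi\norm{\widehat q-q}$, normalization via $\norm{\nabla\psi(q)}\ge 1/R$, the same parameter scalings (the paper fixes $\zeta=\delta/(2R+\rho)$, $h=\zeta/(8RL_\psi\sqrt d)$, $\tau=\zeta h/(16R\sqrt d)$), and the same polynomial bookkeeping via $r\le\rho\le R$. The only slack in your write-up is converting gauge accuracy to $\psi$ accuracy and checking $g\neq0$; both cost only constant factors.
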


\begin{proof}
Set $\zeta:=\delta/(2R+\rho)$. Bisect $[x,y]$ to find a feasible
$\widehat q\in[x,q]$ with
\[
  \norm{\widehat q-q}
  \le\min\left\{\delta,\frac{\zeta}{8RL_\psi}\right\}.
\]
Approximate each $\psi(\widehat q\pm he_i)$ to accuracy $\tau$, where
\[
  h:=\frac{\zeta}{8RL_\psi\sqrt d},
  \qquad
  \tau:=\frac{\zeta h}{16R\sqrt d}
\]
and let $\widetilde g$ be the centered finite-difference vector. Then
\[
  \norm{\widetilde g-\nabla\psi(q)}
  \le\sqrt d\left(\frac{L_\psi h}{2}+\frac{\tau}{h}\right)
    +L_\psi\norm{\widehat q-q}
  \le\frac{\zeta}{4R}.
\]
Since $\norm{\nabla\psi(q)}\ge1/R$, normalizing $\widetilde g$ gives
the claimed $\widetilde n$. There are $2d$ gauge evaluations and one
segment bisection. Since $r\le\rho\le R$,
$R^2L_\psi\le(R/r)^2+(R/r)^4$ and
$1/\zeta\le3R/\delta$. Thus all required accuracies are
inverse-polynomial in $d+R/r+R/\delta$, proving the query bound.
\end{proof}

\begin{proof}[Proof of Proposition~\ref{prop:membership-local}]
Return $x$ if $c=0$, and otherwise set
$y=x-sc/\norm c$. If $y\in\K$, return $y$. Otherwise, let $q$ be the
last feasible point on $[x,y]$ and set
\[
  \delta:=\frac14\min\left\{r,\frac{s^2}{\rho}\right\}.
\]
Run the algorithm of Lemma~\ref{lem:membership-tangent} to obtain
$\widehat q,\widetilde n$, and define
\[
  \B_q:=\B(q-\rho n(q),\rho),\qquad
  \widetilde H:=
  \{z:\ip{\widetilde n}{z-\widehat q}\le2\delta\},\qquad
  \widetilde\B:=
  \B(\widehat q-\rho\widetilde n,\rho-2\delta).
\]
Let
\[
  p\in\arg\min_{z\in\B(x,s)\cap\widetilde H}\ip c z,
  \qquad
  u:=\operatorname{proj}_{\widetilde\B}(p),
\]
and let $\widehat p$ be the projection of $u$ onto $\B(x,s)$ along the
segment from $x$ to $u$.

Write $n=n(q)$. For every $z\in\K$,
\[
  \ip{\widetilde n}{z-\widehat q}\le2\delta,
  \qquad
  \norm{(\widehat q-\rho\widetilde n)-(q-\rho n)}
  \le2\delta.
\]
Hence $\K\subseteq\widetilde H$ and
$\widetilde\B\subseteq\B_q\subseteq\K$, so
\[
  \ip c p\le
  \min_{z\in\K\cap\B(x,s)}\ip c z.
\]

Let $v=p-q$. Since $\norm v\le2s$ and $p\in\widetilde H$,
\[
  \ip n v
  \le3\delta+\frac{2s}{2R+\rho}\delta
  \le5\delta.
\]
The elementary inequality
\[
  \dist(q+v,\B_q)
  \le[\ip n v]_++\frac{\norm v^2}{2\rho}
\]
therefore gives
\[
  \norm{u-p}
  =\dist(p,\widetilde\B)
  \le\frac{2s^2}{\rho}+9\delta.
\]
Since $p\in\B(x,s)$, the final radial projection moves $u$ by at most
$\norm{u-p}$. Thus
\[
  \norm{\widehat p-p}
  \le2\norm{u-p}
  \le\frac{4s^2}{\rho}+18\delta
  \le\frac{9s^2}{\rho}.
\]
Moreover, $x,u\in\K$ and $\widehat p\in[x,u]$, so
$\widehat p\in\K\cap\B(x,s)$. It follows that
\[
  \ip c{\widehat p}
  \le
  \min_{z\in\K\cap\B(x,s)}\ip c z
  +
  \frac9\rho\norm c\,s^2.
\]

Finally, $R/\delta=O(R/r+(R/s)^2)$, and
Lemma~\ref{lem:membership-tangent} gives the claimed
$O(d\log(d+R/r+R/s))$ query bound.
\end{proof}

\end{document}